\documentclass[journal]{IEEEtran}
\ifCLASSINFOpdf
\else
\fi

\usepackage{graphicx}          

\usepackage[dvips]{epsfig}    
\usepackage{latexsym,amssymb}
\usepackage{amsmath, amsbsy, amsthm}
\usepackage{amsopn, amstext}
\usepackage{colortbl}
\usepackage{cite}
\newtheorem{remark}{Remark}
\newtheorem{theorem}{Theorem}
\newtheorem{definition}{Definition}
\newtheorem{lemma}{Lemma}

\newtheorem{problem}{Problem}
\newtheorem{assumption}{Assumption}
\newtheorem{corollary}{Corollary}
\allowdisplaybreaks
\hyphenation{op-tical net-works semi-conduc-tor}

\begin{document}
%
\title{Distributed Control and Stabilization for Discrete-time Large Scale Systems With Imposed Constraints}
%
%
%

\author{Qingyuan~Qi,~\IEEEmembership{}
        Huanshui~Zhang,~\IEEEmembership{Senior Member,~IEEE}, and
        Peijun~Ju~\IEEEmembership{}
\thanks{This work is supported by the National Natural Science Foundation of
China under Grants 61120106011, 61573221, 61633014.

Q. Qi and H. Zhang are with School of Control Science and Engineering, Shandong University, Jinan 250061, 
China. P. Ju is with School of Mathematics and Statistics, Taishan University, Taian 271021, China. H. Zhang is the corresponding author. (e-mail: qiqy123@163.com, hszhang@sdu.edu.cn, jpj615@163.com.)}}

\maketitle

\begin{abstract}
This paper is concerned with the distributed control and stabilization problems for linear discrete-time large scale systems with imposed constraints. The main contributions of this paper are: Firstly, by using the maximum principle (necessary condition) for the finite horizon optimal control developed in this paper, the optimal distributed controller is thus derived, which can be easily calculated; Secondly, by defining the Lyapunov function with the optimal cost function, we show that the systems with imposed constraints can be stabilized by the optimal distributed controller for the infinite horizon case. The main techniques adopted in this paper are the maximum principle and the solution to the forward and backward difference equations (FBDE), which are the basis in solving distributed control and stabilization problems for linear systems with constraints.
\end{abstract}

\begin{IEEEkeywords}
Distributed control, state constraints, maximum principle, stabilization
\end{IEEEkeywords}

%
\IEEEpeerreviewmaketitle

\section{Introduction}

With the rapid development of science and technology, large scale systems can be used to describe many kinds of physical systems, such as power networks, economic systems, urban traffic systems, and sensor networks \cite{dd78,sv78,bpm}. The study of the large scale systems is challenging on the controller design and system analysis. For large scale systems, the traditional centralized control strategy is not applicable, which is due to the lack of centralized information, on the other hand, the computational burden grows higher with the increase of the system dimensionality \cite{pp13,an12,hw68,fcwls}. Instead, distributed control strategy is an attractive technology in handing the control problems for large scale systems, \cite{mm1,mm2,cek,jm85}. Distributed control strategy can decompose large-scale systems to local sub-systems for which the local controller are designed to regulate the systems.

The study on distributed control problems have attracted many researchers' interest since the last century. There are many techniques to design a distributed control, such as modified LQR control, distributed optimal control and market-based control, see \cite{sv78,tc11,llm, rad,lsacc}. While for large-scale systems, the information exchange between the subsystems and the the controller is usually described as various constraints. Particularly, sparsity constraints and delay constraints between subsystems are most considered, \cite{dd78,an12,st13,ad10,rmls}. The complexity of such problems have been illustrated in previous literatures. In some special cases, such as the systems has a compatible sparsity pattern \cite{an12}, or the constraints are imposed on the closed-loop behavior \cite{tc11}. However, for the cases mentioned above, it has been shown that most of these types of distributed controllers are qualitatively different from the corresponding centralized controllers, and have been proved to be NP-hard problems \cite{hw68,jm85}.


Recently, a control structure is put forward, motivated by the coordinated control of
networks of wind turbines \cite{mm1,mm2}. The authors studied a homogeneous group of autonomous
agents with a global linear constraint on their average behavior. The constraints are imposed
on an virtual `average' agent, and such kind of formulation is motivated by certain control tasks arising in wind
power plants, which has great practical significance \cite{mm1}. Both \cite{mm1} and \cite{mm2} investigated the continuous-time case. It should be pointed out that the formulation investigated in this paper can be viewed as the discrete version of low-rank coordination problem. However, this paper differs from \cite{mm1,mm2} in the following aspects: 1) The stabilization and optimal distributed control of the infinite horizon will be investigated in this paper, and the necessary and sufficient stabilization conditions will be explored; 2) In this paper, we will study the discrete-time case; 3) The adopted methods in this paper are maximum principle and the solution to the FBDE, which are different from \cite{mm1,mm2} and are more intuitive; 4) Our methods have high scalability, and can be easily extended to systems with time delay and random disturbances.

As far as we know, the distributed control and stabilization problems for large scale systems with constraints have not been fully solved yet, especially the optimal distributed controller design and the stability conditions haven't been developed \cite{mm1,mm2}. However, we overcome the obstacle for the considered problem in this paper. This paper provides a thorough solution to the discrete-time distributed control and stabilization problems with low-rank coordination. First, the maximum principle (necessary condition) of the optimal controller will be developed for the first time, and the system of the FBDE will be obtained. The maximum principle will serve as the basis in solving the distributed control and stabilization problems under consideration. Next by decoupling the FBDE, the optimal distributed controller for the finite horizon case will be derived and the relationship between the costate and the state (solution to the FBDE) will be obtained. Finally, under mild conditions, we will investigate the control and stabilization problems for the infinite horizon case, and the necessary and sufficient stabilization conditions will be developed. We will show the stabilizing controller also minimizes the associated infinite horizon cost function.

Our presentation is organized as follows. In section II, we give the problem formulation and the finite horizon distributed control problem will be solved. In section III, we present and discuss the optimal control of infinite horizon case, and the necessary and sufficient stabilizing conditions will be explored. A numerical example is given in Section IV to show the effectiveness of the main results; Finally, this paper is concluded in Section V.

The following notations will be used throughout this paper:

\textbf{Notations:} $\mathbb{R}^{n\times m}$ means the set of $n\times m$ real matrices, $\mathbb{R}^{n}$ indicates the $n$-dimensional Euclidean space, superscript $'$ denotes the transpose of a matrix; Real matrix $\mathcal{A}>0(\geq 0)$ means $\mathcal{A}$ is positive definite (positive semi-definite);  $Tr(Y)$ significants the trace of matrix $Y$, and $I_n$ means the $n \times n$ identity matrix.

\section{Optimal Control of Finite Horizon Case}
\subsection{Problem Formulation}
Consider the following discrete-time system:
\begin{equation}\label{sys1}
\left\{ \begin{array}{ll}
x_{k+1}^i=Ax_k^i+Bu_k^i,~~i=1,\cdots,v,\\
x_0^i=\xi^i,\\
\end{array} \right.
\end{equation}
where $x_k^i\in\mathbb{R}^n$ is the state of the $i$-th subsystem, $u_k^i\in\mathbb{R}^m$ is the corresponding control input, and $A\in\mathbb{R}^{n\times n}$, $B\in\mathbb{R}^{n\times m}$ are the given system matrices.

Denote the `average' state and `average' control as follows,
\begin{align}\label{ava}
  \bar{x}_k=\sum_{i=1}^{v}\mu_i x_k^i,~~ \bar{u}_k=\sum_{i=1}^{v}\mu_i u_k^i,
\end{align}where $\mu_i\in\mathbb{R}$ represents the weighting factor for the $i$-th system.

It can be easily derived from \eqref{sys1} that
\begin{align}\label{bar}
  \bar{x}_{k+1} & =A\bar{x}_k+B\bar{u}_k.
\end{align}

Associated with the $i$-th subsystem \eqref{sys1}, the cost function $J_N^i$ is introduced as:
\begin{align}\label{cos1}
  J_N^i &= \sum_{k=0}^{N}[(x_k^i)'Qx_k^i+(u_k^i)'Ru_k^i]
\end{align}
where $Q\in\mathbb{R}^{n\times n}$ and $R\in\mathbb{R}^{m\times m}$ are symmetric weighting matrices.

In this section, we consider the following imposed constraint on the `average' state:
\begin{align}\label{cons1}
  \bar{u}_k & =\bar{F}_k\bar{x}_k,
\end{align}
where $\bar{F}_k$ are given gain matrix for $k=0,\cdots,N$.

The main problem to be solved in this section can be formulated as:
\begin{problem}\label{PROB}
  For the given gain matrix $\bar{F}_k,k=0,\cdots,N$, to find $u_k^i$ to
\begin{align}\label{pro1}
  \text{minimize}~~ J_N=\sum_{i=1}^{v}J_N^i,
\end{align}
\begin{equation}\label{sys2}
\text{subject to:}\left\{ \begin{array}{ll}
x_{k+1}^i=Ax_k^i+Bu_k^i,~~i=1,\cdots,v,\\
\bar{u}_k =\bar{F}_k\bar{x}_k.
\end{array} \right.
\end{equation}
\end{problem}

\begin{remark}
  It should be pointed out that although the continuous-time version of Problem \ref{PROB} was studied in \cite{mm1,mm2}, Problem \ref{PROB} is still worth studying, the motivations are listed as follows. Firstly, the infinite horizon optimal distributed control and stabilization problems will be investigated, and we will develop the necessary and sufficient stabilization conditions, which have never been derived yet; Secondly, we study the discrete-time case in this paper; Thirdly, the main techniques in this paper differ from that in \cite{mm1,mm2}, we will develop the maximum principle for Problem \ref{PROB} and solve the associated FBDE.
\end{remark}

\subsection{Solution to Problem 1}
To develop the maximum principle (necessary condition) of minimizing $J_N$ in \eqref{pro1}, we define the Hamiltonian function as below:
\begin{align}\label{Ham}
  \mathcal{H}_k & =\frac{1}{2}\sum_{i=1}^{v}[(x_k^i)'Qx_k^i+(u_k^i)'Ru_k^i]\\
  &+\sum_{i=1}^{v}(p_k^i)'[(Ax_k^i+Bu_k^i)-x_{k+1}^i]
  +\hspace{-1mm}(p_k^{v+1})'(\bar{u}_k-\bar{F}_k\bar{x}_k),\notag
\end{align}
where $p_k^i$ denotes the costate for $i=1,\cdots,v+1$.

Then the following lemma (maximum principle) is introduced, which serves as the basic tool in this paper.
\begin{lemma}
The optimal distributed controller $u_k^i,i=1,\cdots,v$ satisfies the equilibrium equation:
\begin{align}
  \left(\frac{\partial \mathcal{H}_k}{\partial u_k^i}\right)' & =Ru_k^i+B'p_k^i+\mu_ip_k^{v+1}=0, k=0,\cdots,N.\label{equ1}
\end{align}

The costate $p_k^i$ obeys the following iteration (adjoint equation):
\begin{align}
  \left(\frac{\partial \mathcal{H}_k}{\partial x_k^i}\right)'\hspace{-1mm} & \hspace{-1mm} =\hspace{-1mm}p_{k-1}^i\hspace{-1mm}
  =\hspace{-1mm}Qx_k^i\hspace{-1mm}+\hspace{-1mm}A'p_k^i-\mu_i\bar{F}_k'p_k^{v+1}, 1\leq k\leq N,\label{equ2}
\end{align}
with final condition $p_N^i=0$ for $i=1,\cdots,v$.

\end{lemma}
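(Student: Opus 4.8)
The plan is to treat Problem~\ref{PROB} as a finite-dimensional equality-constrained minimization in the decision variables $\{x_k^i\}$ and $\{u_k^i\}$ and to apply the Lagrange multiplier rule. First I would adjoin to $J_N$ (equivalently to $\tfrac12 J_N$, which has the same minimizer) the dynamic equality constraints $x_{k+1}^i=Ax_k^i+Bu_k^i$ with multipliers $p_k^i$, $i=1,\dots,v$, together with the coupling constraint $\bar u_k=\bar F_k\bar x_k$ with multipliers $p_k^{v+1}$, for $k=0,\dots,N$. The resulting augmented Lagrangian is precisely $L=\sum_{k=0}^{N}\mathcal H_k$ with $\mathcal H_k$ as in \eqref{Ham}. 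Since all the constraints are affine, the Lagrange multiplier rule applies in its normal form, so a necessary condition for a minimizer is that $L$ be stationary with respect to each \emph{free} variable; stationarity in the multipliers $p_k^i$ and $p_k^{v+1}$ simply returns the primal constraints \eqref{sys1} and \eqref{cons1}.

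Next I would compute the remaining partial derivatives. For each $k=0,\dots,N$ the control $u_k^i$ is free, and $\partial L/\partial u_k^i$ collects $Ru_k^i$ from the running cost (using $R=R'$), $B'p_k^i$ from the term $(p_k^i)'Bu_k^i$, and $\mu_i p_k^{v+1}$ from $(p_k^{v+1})'\bar u_k$ since $\bar u_k=\sum_{j}\mu_j u_k^j$; setting this to zero gives \eqref{equ1}. For each $k=1,\dots,N$ the state $x_k^i$ is free (the initial state $x_0^i=\xi^i$ is prescribed and is not varied), and $\partial L/\partial x_k^i$ collects $Qx_k^i$ from the running cost, $A'p_k^i$ from $(p_k^i)'Ax_k^i$, $-\mu_i\bar F_k'p_k^{v+1}$ from $-(p_k^{v+1})'\bar F_k\bar x_k$ with $\bar x_k=\sum_j\mu_j x_k^j$, and $-p_{k-1}^i$ from the single contribution $-(p_{k-1}^i)'x_k^i$ of the telescoped sum $\sum_{\ell=0}^{N}(p_\ell^i)'x_{\ell+1}^i=\sum_{\ell=1}^{N+1}(p_{\ell-1}^i)'x_\ell^i$; equating the total to zero and rearranging yields the adjoint recursion \eqref{equ2}. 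Finally, $x_{N+1}^i$ enters $L$ only through the term $-(p_N^i)'x_{N+1}^i$ and is itself free, so $\partial L/\partial x_{N+1}^i=0$ forces $p_N^i=0$, which is the stated terminal condition.

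The argument is essentially bookkeeping, and the only step needing care is the discrete-time index shift in the telescoped costate sum above: it is simultaneously responsible for $p_{k-1}^i$ (rather than $p_k^i$) appearing on the left of \eqref{equ2}, for the range $1\le k\le N$ of that recursion, and for the terminal condition $p_N^i=0$ extracted from the dummy variable $x_{N+1}^i$; dually, the absence of any costate condition at $k=0$ reflects that $x_0^i$ is fixed. I do not anticipate a genuine obstacle here: since $J_N$ is a convex quadratic and the constraints are affine, these same stationarity conditions are in fact also sufficient for optimality, although only their necessity is asserted in the lemma.
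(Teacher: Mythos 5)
Your proposal is correct: the Lagrange-multiplier/stationarity bookkeeping (including the $\tfrac12$ normalization of $J_N$ to match the Hamiltonian \eqref{Ham}, the index shift in the telescoped sum $\sum_{\ell}(p_\ell^i)'x_{\ell+1}^i$ that produces $p_{k-1}^i$ and the range $1\le k\le N$, and the free variable $x_{N+1}^i$ yielding $p_N^i=0$) is exactly the standard derivation of the discrete-time maximum principle that the paper omits and delegates to the reference \cite{lew}. The only cosmetic caveat is that your closing remark on sufficiency implicitly uses $Q\geq 0$, $R>0$ (Assumption \ref{ass1}, stated after the lemma), but since the lemma asserts only necessity this does not affect the proof.
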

\begin{proof}
  The proof can be found in \cite{lew}, which is omitted here.
\end{proof}

\begin{remark}
  The system dynamics \eqref{sys1} is forward, and the adjoint equation \eqref{equ2} is backward, \eqref{sys1} and \eqref{equ2} are called the system of FBDE. In the following, by decoupling the system of FBDE and using \eqref{equ1}, the optimal distributed control will be obtained.
\end{remark}

To guarantee the significance of Problem \ref{PROB}, we make the following standard assumption throughout this paper, see \cite{oka}.
\begin{assumption}\label{ass1}
  $Q\geq 0, R>0$.
\end{assumption}

The main results of this section can be concluded in the following theorem.
\begin{theorem}\label{th01}
For Problem \ref{PROB}, under Assumption \ref{ass1}, the optimal distributed controller of minimizing $\sum_{i=1}^{v}J_N^i$ can be presented as:
\begin{align}\label{oc1}
  u_k^i & =K_kx_k^i+\frac{\mu_i}{\sum_{i=1}^{v}\mu_i^2}\bar{K}_k\bar{x}_k,
\end{align}
where the gain matrices $K_k$ and $\bar{K}_k$ satisfy
\begin{align}
  K_k & =-(R+B'P_{k+1}B)^{-1}B'P_{k+1}A,\label{k} \\
  \bar{K}_k & =\bar{F}_k-K_k,\label{bark}
\end{align}
and $P_k$ obeys the following Riccati difference equations:
\begin{align}
  P_k & \hspace{-1mm}=\hspace{-1mm}Q\hspace{-1mm}+\hspace{-1mm}A'P_{k+1}A-A'P_{k+1}B(R+B'P_{k+1}B)^{-1}B'P_{k+1}A,\label{PP}
\end{align}
with final condition $P_{N+1}=0$.

In this case, the relationship between the costate $p_{k}^i$ and $x_{k+1}^i$, $\bar{x}_{k+1}$ (solution to the FBDE) can be described as:
\begin{align}
  p_{k}^i &=P_{k+1}x_{k+1}^i+\frac{\mu_i}{\sum_{i=1}^{v}\mu_i^2}\bar{P}_{k+1}\bar{x}_{k+1},i=1,\cdots,v,\label{sol1}\\
  p_k^{v+1}&=-\frac{1}{\sum_{i=1}^{v}\mu_i^2}[(R+B'P_{k+1}B+B'\bar{P}_{k+1}B)\bar{F}_k\notag\\
  &+B'(P_{k+1}+\bar{P}_{k+1})A]\bar{x}_k.\label{sol2}
\end{align}
where $\bar{P}_k$ satisfies the following relationship:
\begin{align}
 \bar{P}_k&=(A+B\bar{F}_k)'\bar{P}_{k+1}(A+B\bar{F}_k)+\bar{F}_k'(R+B'P_{k+1}B)\bar{F}_k\notag\\
  &+A'P_{k+1}B(R+B'P_{k+1}B)^{-1}B'P_{k+1}A\notag\\
  &+A'P_{k+1}B\bar{F}_k+\bar{F}_k'B'P_{k+1}A,\label{BARPP}
\end{align}
with final condition $\bar{P}_{N+1}=0$.

The optimal cost function is calculated as:
\begin{align}\label{ocf}
  J_N^* & =\sum_{i=1}^{v}[(x_0^i)'P_0x_0^i]+\frac{1}{\sum_{i=1}^{v}\mu_i^2}\bar{x}_0'\bar{P}_0\bar{x}_0,
\end{align}
where $P_0,\bar{P}_0$ can be derived from \eqref{PP} and \eqref{BARPP}.
\end{theorem}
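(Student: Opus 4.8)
The plan is to solve the system of FBDE consisting of the forward dynamics \eqref{sys1} and the backward adjoint equation \eqref{equ2}, together with the equilibrium equation \eqref{equ1} and the constraint \eqref{cons1}, by positing the linear ansatz \eqref{sol1}--\eqref{sol2} for the costates $p_k^i$ and $p_k^{v+1}$ and verifying it by backward induction on $k$. First I would check the terminal condition: at $k=N$ we have $p_N^i=0$, and since $P_{N+1}=\bar P_{N+1}=0$ the ansatz \eqref{sol1} holds trivially; the expression \eqref{sol2} for $p_N^{v+1}$ is then taken as the \emph{definition} of what $p_N^{v+1}$ must be, and one checks it is consistent with \eqref{equ1} at $k=N$ (this pins down $u_N^i$ in the form \eqref{oc1} with $K_N,\bar K_N$ as in \eqref{k}--\eqref{bark}, using $P_{N+1}=0$).

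Next I would do the inductive step. Assume \eqref{sol1} holds at time $k$, i.e. $p_k^i=P_{k+1}x_{k+1}^i+\frac{\mu_i}{\sum_j\mu_j^2}\bar P_{k+1}\bar x_{k+1}$. Substitute this into the equilibrium equation \eqref{equ1}: $Ru_k^i+B'p_k^i+\mu_i p_k^{v+1}=0$. The key structural observation is that $p_k^{v+1}$ appears multiplied by $\mu_i$, so averaging \eqref{equ1} against $\mu_i$ over $i$ (and using $\sum_i\mu_i u_k^i=\bar u_k=\bar F_k\bar x_k$, $\sum_i\mu_i x_k^i=\bar x_k$) gives a single scalar/matrix equation that determines $p_k^{v+1}$ in terms of $\bar x_k$ only — this is where \eqref{sol2} comes from, after inserting $x_{k+1}^i=Ax_k^i+Bu_k^i$, $\bar x_{k+1}=(A+B\bar F_k)\bar x_k$ and collecting terms. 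Then, substituting this $p_k^{v+1}$ back into \eqref{equ1} and solving for $u_k^i$ yields \eqref{oc1}; the individual-state part gives $K_k$ as in \eqref{k} and the average part gives $\bar K_k=\bar F_k-K_k$, where the identity $\sum_i\mu_i K_k x_k^i + \frac{\sum_i\mu_i^2}{\sum_j\mu_j^2}\bar K_k\bar x_k = K_k\bar x_k+\bar K_k\bar x_k=\bar F_k\bar x_k$ confirms the constraint is automatically satisfied. Finally, plug \eqref{oc1} into the adjoint equation \eqref{equ2} at step $k-1$, i.e. $p_{k-1}^i=Qx_k^i+A'p_k^i-\mu_i\bar F_k'p_k^{v+1}$, substitute the just-derived expressions, and match coefficients of $x_k^i$ and of $\bar x_k$; demanding that the result again has the form \eqref{sol1} at time $k-1$ forces exactly the Riccati recursion \eqref{PP} for $P_k$ and the recursion \eqref{BARPP} for $\bar P_k$. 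The optimal cost \eqref{ocf} then follows by the standard completion-of-squares / telescoping argument: write $J_N$ using the dynamics and the relation \eqref{sol1}, and show $\sum_k$ of the per-stage terms telescopes to the quadratic form at $k=0$, with the cross terms vanishing on the optimal trajectory because of \eqref{equ1}.

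The main obstacle I expect is the bookkeeping in the inductive step: correctly isolating $p_k^{v+1}$ by the $\mu_i$-weighted average and then verifying that, after resubstitution, the coefficient of $x_k^i$ collapses to the standard LQR Riccati map \eqref{PP} while the coefficient of $\bar x_k$ produces precisely \eqref{BARPP}. In particular one must be careful that the $\frac{\mu_i}{\sum_j\mu_j^2}$ normalization is consistent everywhere — it appears in \eqref{oc1}, \eqref{sol1}, \eqref{sol2} — and that the terms $A'P_{k+1}B(R+B'P_{k+1}B)^{-1}B'P_{k+1}A$ reappearing in \eqref{BARPP} come out with the right sign after combining the $K_k$-dependent pieces. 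Positive definiteness of $R+B'P_{k+1}B$, needed to invert it in \eqref{k} and \eqref{sol2}, is guaranteed by Assumption \ref{ass1} together with $P_{k+1}\ge 0$, the latter being an easy side induction from \eqref{PP}. Uniqueness of the minimizer (so that the necessary conditions are also sufficient here) follows from strict convexity of $J_N$ in the controls under $R>0$, which I would note at the end.
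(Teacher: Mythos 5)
Your proposal is correct and follows essentially the same route as the paper: backward induction on the affine costate ansatz \eqref{sol1}--\eqref{sol2}, isolating $p_k^{v+1}$ by taking the $\mu_i$-weighted sum of the equilibrium equation \eqref{equ1} together with the constraint \eqref{cons1}, resubstituting to obtain \eqref{oc1} with \eqref{k}--\eqref{bark}, propagating through the adjoint equation to force \eqref{PP} and \eqref{BARPP}, and finishing with the completion-of-squares/telescoping identity for \eqref{ocf}. The side observations ($P_k\geq 0$ by induction so $R+B'P_{k+1}B>0$, and the automatic satisfaction of the constraint since $K_k+\bar K_k=\bar F_k$) match the paper's treatment as well.
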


\begin{proof}
We will show this theorem by using the induction methods.

Firstly, with $k=N$, by taking the weighting sum on both sides of \eqref{equ1}, we have
  \begin{align}\label{UN}
    R\bar{u}_N +B'\sum_{i=1}^{v}\mu_ip_N^i+\sum_{i=1}^{v}\mu_i^2p_{N}^{v+1}=0.
  \end{align}

Noting the constraint $\bar{u}_N=\bar{F}_N\bar{x}_N$ and $p_N^i=0$ for $1\leq i\leq v$, then there holds
\begin{align}\label{UN1}
  &R\bar{F}_N\bar{x}_N+\sum_{i=1}^{v}\mu_i^2p_{N}^{v+1}=0,
  \Rightarrow
p_{N}^{v+1}=-\frac{1}{\sum_{i=1}^{v}\mu_i^2} R\bar{F}_N\bar{x}_N.
\end{align}
Since $P_{N+1}=\bar{P}_{N+1}=0$, then \eqref{UN1} indicates \eqref{sol2} has been verified for $k=N$.

In what follows, by using $p_N^i=0$, it can be obtained from \eqref{equ1} that
\begin{align}\label{ui}
  0 & =Ru_{N}^i+B'p_N^i+\mu_ip_N^{v+1}=Ru_{N}^i-\frac{\mu_i}{\sum_{i=1}^{v}\mu_i^2} R\bar{F}_N\bar{x}_N,\notag\\
    &\Rightarrow u_N^i=\frac{\mu_i}{\sum_{i=1}^{v}\mu_i^2} \bar{F}_N\bar{x}_N.
\end{align}
i.e., \eqref{oc1} has been obtained for $k=N$.

Then following from \eqref{equ2} with $k=N$, there holds
\begin{align}\label{pN-1}
  p_{N-1}^i & =Qx_N^i+A'p_N^i-\mu_i\bar{F}_N'p_N^{v+1}\notag\\
  &=Qx_N^i+\frac{\mu_i}{\sum_{i=1}^{v}\mu_i^2}\bar{F}_N' R\bar{F}_N\bar{x}_N\notag\\
  &=P_Nx_N^i+\frac{\mu_i}{\sum_{i=1}^{v}\mu_i^2}\bar{P}_N\bar{x}_N,
\end{align}
where $P_{N+1}=\bar{P}_{N+1}=0$ has been used above, and $P_N,\bar{P}_N$ obey \eqref{PP}, \eqref{BARPP}, respectively.


Therefore, \eqref{oc1}-\eqref{BARPP} are verified for $k=N$.

To complete the induction, we assume \eqref{oc1}-\eqref{BARPP} are true for $k=l+1,\cdots,N$. i.e., for $k=l+1,\cdots,N$, we assume
\begin{itemize}
  \item The optimal distributed control $u_k^i$ satisfies \eqref{oc1};
  \item The costate $p_k^i, p_k^{v+1}$ satisfy \eqref{sol1}, \eqref{sol2}, respectively;
  \item The Riccati difference equations \eqref{PP} and \eqref{BARPP} hold.
\end{itemize}

Next we will show they are also true for $k=l$.

In fact, from \eqref{equ2}, $p_l^i$ can be calculated as:
\begin{align}\label{pl}
  p_{l}^i&=Qx_{l+1}^i+A'p_{l+1}^i-\mu_i\bar{F}_{l+1}'p_{l+1}^{v+1}\notag\\
&=Qx_{l+1}^i+A'[P_{l+2}x_{l+2}^i+\frac{\mu_i}{\sum_{i=1}^{v}\mu_i^2}\bar{P}_{l+2}\bar{x}_{l+2}]\notag\\
&+\frac{\mu_i}{\sum_{i=1}^{v}\mu_i^2}\bar{F}_{l+1}'[(R+B'P_{l+2}B+B'\bar{P}_{l+2}B)\bar{F}_{l+1}\notag\\
&+B'(P_{l+2}+\bar{P}_{l+2})A]\bar{x}_{l+1}\notag\\
&=(Q+A'P_{l+2}A)x_{l+1}^i+A'P_{l+2}Bu_{l+1}^i\notag\\
&+\frac{\mu_i}{\sum_{i=1}^{v}\mu_i^2}A'\bar{P}_{l+2}(A+B\bar{F}_{l+1})\bar{x}_{l+1}\notag\\
&+\frac{\mu_i}{\sum_{i=1}^{v}\mu_i^2}\bar{F}_{l+1}'[(R+B'P_{l+2}B+B'\bar{P}_{l+2}B)\bar{F}_{l+1}\notag\\
&+B'(P_{l+2}+\bar{P}_{l+2})A]\bar{x}_{l+1}\notag\\
&=[Q\hspace{-1mm}+\hspace{-1mm}A'P_{l+2}A\hspace{-1mm}-\hspace{-1mm}A'P_{l+2}B(R\hspace{-1mm}+\hspace{-1mm}B'P_{l+2}B)^{-1}B'P_{l+2}A]x_{l+1}^i\notag\\
&+\frac{\mu_i}{\sum_{i=1}^{v}\mu_i^2}\Big\{A'P_{l+2}B(R+B'P_{l+2}B)^{-1}B'P_{l+2}A\notag\\
&+A'P_{l+2}B\bar{F}_{l+1}+A'\bar{P}_{l+2}(A+B\bar{F}_{l+1})\notag\\
&+\bar{F}_{l+1}'(R+B'P_{l+2}B+B'\bar{P}_{l+2}B)\bar{F}_{l+1}\notag\\
&+\bar{F}_{l+1}'B'(P_{l+2}+\bar{P}_{l+2})A\Big\}\bar{x}_{l+1}\notag\\
&=P_{l+1}x_{l+1}^i+\frac{\mu_i}{\sum_{i=1}^{v}\mu_i^2}\bar{P}_{l+1}\bar{x}_{l+1},
\end{align}
where \eqref{oc1}, \eqref{PP} and \eqref{BARPP} have been used for $k=l+1$.

Taking the weighted summation on \eqref{equ1} for $k=l$, there holds
\begin{align}\label{ul}
  R\bar{u}_l+B'\sum_{i=1}^{v}\mu_ip_l^i+\sum_{i=1}^{v}\mu_i^2p_{l}^{v+1}=0,
\end{align}
The constraint \eqref{cons1} reads that $\bar{u}_l=\bar{F}_l\bar{x}_l$, then \eqref{ul} indicates that
\begin{align}\label{ind1}
&R\bar{F}_l\bar{x}_l+\sum_{i=1}^{v}B'(P_{l+1}\mu_ix_{l+1}^i+\frac{\mu_i^2}{\sum_{i=1}^{v}\mu_i^2}\bar{P}_{l+1}\bar{x}_{l+1})\notag\\
&+\sum_{i=1}^{v}\mu_i^2p_{l}^{v+1}=0,\notag\\
\Rightarrow~~&R\bar{F}_l\bar{x}_l+B'(P_{l+1}\bar{x}_{l+1}+\bar{P}_{l+1}\bar{x}_{l+1})+\sum_{i=1}^{v}\mu_i^2p_{l}^{v+1}=0,\notag\\
\Rightarrow~~&p_l^{v+1}=-\frac{1}{\sum_{i=1}^{v}\mu_i^2}[(R+B'P_{l+1}B+B'\bar{P}_{l+1}B)\bar{F}_l\notag\\
&+B'(P_{l+1}+\bar{P}_{l+1})A]\bar{x}_l.
\end{align}

Thus \eqref{pl} and \eqref{ind1} yield that \eqref{sol1}-\eqref{sol2} are true for $k=l$.

In what follows, from \eqref{equ1}, by using \eqref{pl} and \eqref{ind1} we have
\begin{align}\label{ul2}
0&=Ru_l^i+B'p_l^i+\mu_ip_l^{v+1}\notag\\
&=Ru_l^i+B'(P_{l+1}x_{l+1}^i+\frac{\mu_i}{\sum_{i=1}^{v}\mu_i^2}\bar{P}_{l+1}\bar{x}_{l+1})\notag\\
&-\frac{\mu_i}{\sum_{i=1}^{v}\mu_i^2}[(R+B'P_{l+1}B+B'\bar{P}_{l+1}B)\bar{F}_l\notag\\
&+B'(P_{l+1}+\bar{P}_{l+1})A]\bar{x}_l\notag\\
&=
(R+B'P_{l+1}B)u_l^i+B'P_{l+1}Ax_l^i\notag\\
&+\frac{\mu_i}{\sum_{i=1}^{v}\mu_i^2}\Big\{B'P_{l+1}(A+B\bar{F}_l)\notag\\
&-(R+B'P_{l+1}B+B'\bar{P}_{l+1}B)\bar{F}_l\hspace{-1mm}-\hspace{-1mm}B'(P_{l+1}\hspace{-1mm}+\hspace{-1mm}\bar{P}_{l+1})A\Big\}\bar{x}_l\notag\\
&=(R+B'P_{l+1}B)u_l^i+B'P_{l+1}Ax_l^i\notag\\
&-\frac{\mu_i}{\sum_{i=1}^{v}\mu_i^2}\Big\{(R+B'P_{l+1}B)\bar{F}_l+B'P_{l+1}A\Big\}\bar{x}_l,\notag\\
\Rightarrow~~&u_l^i=K_lx_l^i+\frac{\mu_i}{\sum_{i=1}^{v}\mu_i^2}\bar{K}_l\bar{x}_l,
\end{align}
where $K_l,\bar{K}_l$ are given by \eqref{k} and \eqref{bark}. Then \eqref{oc1} is verified for $k=l$.

Therefore, \eqref{oc1}-\eqref{BARPP} are true for $k=l$, this completes the induction methods, and \eqref{oc1}-\eqref{BARPP} can be developed for any $k=0,\cdots,N$.

Finally, we will calculate the optimal cost function. For simplicity, we denote
\begin{align}\label{VN}
  V_N(k,x_k,\bar{x}_k)=\sum_{i=1}^{v}(x_k^i)'P_kx_k^i+\frac{1}{\sum_{i=1}^{v}\mu_i^2}\bar{x}_k'\bar{P}_k\bar{x}_k.
\end{align}

It can be verified from \eqref{VN} that
\begin{align}\label{costa}
 &~~~ V_N(k+1,x_{k+1},\bar{x}_{k+1}) \notag\\ &=\sum_{i=1}^{v}(x_{k+1}^i)'P_{k+1}x_{k+1}^i+\frac{1}{\sum_{i=1}^{v}\mu_i^2}\bar{x}_{k+1}'\bar{P}_{k+1}\bar{x}_{k+1}\notag\\
  &=\sum_{i=1}^{v}(Ax_{k}^i+Bu_k^i)'P_{k+1}(Ax_{k}^i+Bu_k^i)\notag\\
  &+\frac{1}{\sum_{i=1}^{v}\mu_i^2}[(A+B\bar{F}_k)\bar{x}_{k}]'\bar{P}_{k+1}[(A+B\bar{F}_k)\bar{x}_{k}]\notag\\
  &=\sum_{i=1}^{v}\Big[(x_k^i)'A'P_{k+1}Ax_k^i+2(u_k^i)'B'P_{k+1}Ax_k^i\notag\\
  &~~~+(u_k^i)'B'P_{k+1}Bu_k^i\Big]\notag\\
  &+\frac{1}{\sum_{i=1}^{v}\mu_i^2}\bar{x}_k'(A+B\bar{F}_k)'\bar{P}_{k+1}(A+B\bar{F}_k)\bar{x}_k.
\end{align}

Noting the fact that
\begin{align}\label{alij}
  &\sum_{i=1}^{v}(u_k^i-K_kx_k^i-\frac{\mu_i}{\sum_{i=1}^{v}\mu_i^2}\bar{K}_k\bar{x}_k)'(R+B'P_{k+1}B)\\
  &\times(u_k^i-K_kx_k^i-\frac{\mu_i}{\sum_{i=1}^{v}\mu_i^2}\bar{K}_k\bar{x}_k)\notag\\
=& \sum_{i=1}^{v}\Big[(u_k^i)'(R+B'P_{k+1}B)u_k^i+2(u_k^i)'B'P_{k+1}Ax_k^i\notag\\
&+(x_k^i)'A'P_{k+1}B(R+B'P_{k+1}B)^{-1}B'P_{k+1}Ax_k^i\Big]\notag\\
&-\frac{1}{\sum_{i=1}^{v}\mu_i^2}\bar{x}_k'\Big[\bar{F}_k'(R+B'P_{k+1}B)\bar{F}_k+A'P_{k+1}B\bar{F}_k\notag\\
&+\bar{F}_k'B'P_{k+1}A\hspace{-1mm}+\hspace{-1mm}A'P_{k+1}B(R\hspace{-1mm}+\hspace{-1mm}B'P_{k+1}B)^{-1}B'P_{k+1}A\Big]\bar{x}_k.\notag
\end{align}

Then, combining \eqref{VN}-\eqref{alij} yields that
\begin{align}\label{minu}
  &~~~V_N(k,x_{k},\bar{x}_{k})-V_N(k+1,x_{k+1},\bar{x}_{k+1})\notag\\
  &=\sum_{i=1}^{v}\Big\{(x_k^i)'[P_k-A'P_{k+1}A\notag\\
  &+A'P_{k+1}B(R+B'P_{k+1}B)^{-1}B'P_{k+1}A]x_k^i+(u_k^i)'Ru_k^i\Big\}\notag\\
  &+\frac{1}{\sum_{i=1}^{v}\mu_i^2}\bar{x}_k'\Big[\bar{P}_k-(A+B\bar{F}_k)'\bar{P}_{k+1}(A+B\bar{F}_k)\notag\\
  &-\bar{F}_k'(R+B'P_{k+1}B)\bar{F}_k\notag\\
  &-A'P_{k+1}B\bar{F}_k-\bar{F}_k'B'P_{k+1}A\notag\\
  &-A'P_{k+1}B(R+B'P_{k+1}B)^{-1}B'P_{k+1}A\Big]\bar{x}_k\notag\\
  &-\sum_{i=1}^{v}(u_k^i-K_kx_k^i-\frac{\mu_i}{\sum_{i=1}^{v}\mu_i^2}\bar{K}_k\bar{x}_k)'(R+B'P_{k+1}B)\notag\\
  &\times (u_k^i-K_kx_k^i-\frac{\mu_i}{\sum_{i=1}^{v}\mu_i^2}\bar{K}_k\bar{x}_k)\notag\\
  &=\sum_{i=1}^{v}[(x_{k}^i)'Qx_k^i+(u_k^i)'Ru_k^i]\notag\\
  &-\sum_{i=1}^{v}(u_k^i-K_kx_k^i-\frac{\mu_i}{\sum_{i=1}^{v}\mu_i^2}\bar{K}_k\bar{x}_k)'(R+B'P_{k+1}B)\notag\\
  &\times(u_k^i-K_kx_k^i-\frac{\mu_i}{\sum_{i=1}^{v}\mu_i^2}\bar{K}_k\bar{x}_k),
\end{align}
where $K_k,\bar{K}_k$ are as in \eqref{k}, \eqref{bark}.

Taking summation on both sides of \eqref{minu} from $0$ to $N$ and noting $P_{N+1}=\bar{P}_{N+1}=0$, we have
\begin{align}\label{minu2}
 &\sum_{i=1}^{v}(x_0^i)'P_0x_0^i+\frac{1}{\sum_{i=1}^{v}\mu_i^2}\bar{x}_0'\bar{P}_0\bar{x}_0\notag\\
 &=\sum_{i=1}^{v}\sum_{k=0}^{N}[(x_{k}^i)'Qx_k^i+(u_k^i)'Ru_k^i]\notag\\
 &-\sum_{i=1}^{v}(u_k^i-K_kx_k^i-\frac{\mu_i}{\sum_{i=1}^{v}\mu_i^2}\bar{K}_k\bar{x}_k)'(R+B'P_{k+1}B)\notag\\
  &\times(u_k^i-K_kx_k^i-\frac{\mu_i}{\sum_{i=1}^{v}\mu_i^2}\bar{K}_k\bar{x}_k).
\end{align}

On the other hand, from \eqref{PP} we know
\begin{align}\label{geh}
  P_k & \hspace{-1mm}=\hspace{-1mm}Q\hspace{-1mm}+\hspace{-1mm}A'P_{k+1}A-A'P_{k+1}B(R+B'P_{k+1}B)^{-1}B'P_{k+1}A\notag\\
  &=Q+K_k'RK_k+(A+BK_k)'P_{k+1}(A+BK_k),
\end{align}
where $K_k$ satisfies \eqref{k}.

Again by using the induction method, \eqref{geh} implies that $P_k\geq 0$ for $k\geq 0$, then $R+B'P_{k+1}B>0$ can be easily obtained.

Next, there holds from \eqref{geh} that
\begin{align}\label{zll}
  J_N &=\sum_{i=1}^{v}\sum_{k=0}^{N}[(x_{k}^i)'Qx_k^i+(u_k^i)'Ru_k^i]\notag\\
  &=\sum_{i=1}^{v}(x_0^i)'P_0x_0^i+\frac{1}{\sum_{i=1}^{v}\mu_i^2}\bar{x}_0'\bar{P}_0\bar{x}_0\notag\\
 &+\sum_{i=1}^{v}(u_k^i-K_kx_k^i-\frac{\mu_i}{\sum_{i=1}^{v}\mu_i^2}\bar{K}_k\bar{x}_k)'(R+B'P_{k+1}B)\notag\\
  &\times(u_k^i-K_kx_k^i-\frac{\mu_i}{\sum_{i=1}^{v}\mu_i^2}\bar{K}_k\bar{x}_k)\notag\\
  &\geq \sum_{i=1}^{v}(x_0^i)'P_0x_0^i+\frac{1}{\sum_{i=1}^{v}\mu_i^2}\bar{x}_0'\bar{P}_0\bar{x}_0.
\end{align}

Therefore, the optimal cost function can be given from \eqref{minu2} as \eqref{zll}, and the optimal controller is as \eqref{oc1}. This ends the proof.
\end{proof}

\begin{corollary}\label{coro1}
  If the distributed controller is chosen to be $u_k^i=\bar{F}_kx_k^i$, then the constraint \eqref{cons1} is satisfied. Moreover, the cost function $J_N^i$ in \eqref{cos1} can be presented as:
  \begin{align}\label{jnii}
    \tilde{J}_N^i & =(x_0^i)'(P_0+\bar{P}_0)x_0^i
  \end{align}
  where $P_0+\bar{P}_0$ can be obtained from the following Lyapunov equation:
  \begin{align}\label{leqa}
    P_k+\bar{P}_k & =Q+\bar{F}_k'R\bar{F}_k\notag\\
    &~~+(A+B\bar{F}_k)'(P_{k+1}+\bar{P}_{k+1})(A+B\bar{F}_k),
  \end{align}
  and $P_k$, $\bar{P}_k$ obey \eqref{PP}, \eqref{BARPP}, respectively.
\end{corollary}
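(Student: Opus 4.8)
The plan is to settle the corollary in three elementary steps, using only the recursions \eqref{PP}, \eqref{BARPP} and the cost definition \eqref{cos1}; no new machinery beyond Theorem~\ref{th01} is needed. First I would verify the constraint: substituting $u_k^i=\bar{F}_kx_k^i$ into the definition of $\bar{u}_k$ in \eqref{ava} and pulling $\bar{F}_k$ out of the weighted sum gives $\bar{u}_k=\bar{F}_k\sum_{i=1}^{v}\mu_ix_k^i=\bar{F}_k\bar{x}_k$, so \eqref{cons1} holds automatically.

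Second, I would establish the Lyapunov recursion \eqref{leqa} by a purely algebraic manipulation: add \eqref{PP} and \eqref{BARPP}. The point is that the term $A'P_{k+1}B(R+B'P_{k+1}B)^{-1}B'P_{k+1}A$ enters \eqref{PP} with a minus sign and \eqref{BARPP} with a plus sign, so it cancels. Writing $\bar{F}_k'(R+B'P_{k+1}B)\bar{F}_k=\bar{F}_k'R\bar{F}_k+\bar{F}_k'B'P_{k+1}B\bar{F}_k$, the remaining $P_{k+1}$-terms combine into the completed square $(A+B\bar{F}_k)'P_{k+1}(A+B\bar{F}_k)$, which together with the $(A+B\bar{F}_k)'\bar{P}_{k+1}(A+B\bar{F}_k)$ term already present in \eqref{BARPP} gives $(A+B\bar{F}_k)'(P_{k+1}+\bar{P}_{k+1})(A+B\bar{F}_k)$. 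Adding $Q+\bar{F}_k'R\bar{F}_k$ yields exactly \eqref{leqa}; the terminal condition is $P_{N+1}+\bar{P}_{N+1}=0$ from the final conditions in Theorem~\ref{th01}.

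Third, I would compute $\tilde{J}_N^i$ by telescoping. Set $W_k:=P_k+\bar{P}_k$. Under $u_k^i=\bar{F}_kx_k^i$ the closed loop is $x_{k+1}^i=(A+B\bar{F}_k)x_k^i$ and the stage cost in \eqref{cos1} equals $(x_k^i)'(Q+\bar{F}_k'R\bar{F}_k)x_k^i$. From \eqref{leqa},
\begin{align*}
&(x_k^i)'W_kx_k^i-(x_{k+1}^i)'W_{k+1}x_{k+1}^i\\
&=(x_k^i)'\bigl[W_k-(A+B\bar{F}_k)'W_{k+1}(A+B\bar{F}_k)\bigr]x_k^i\\
&=(x_k^i)'(Q+\bar{F}_k'R\bar{F}_k)x_k^i.
\end{align*}
Summing from $k=0$ to $N$ and using $W_{N+1}=0$ leaves $\tilde{J}_N^i=(x_0^i)'W_0x_0^i=(x_0^i)'(P_0+\bar{P}_0)x_0^i$, which is \eqref{jnii}.

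I do not expect any genuine obstacle: the corollary is a direct specialization of Theorem~\ref{th01} to the particular feedback $u_k^i=\bar{F}_kx_k^i$. The only place that needs a little care is the matrix bookkeeping when adding \eqref{PP} and \eqref{BARPP} — tracking the several cross terms in \eqref{BARPP}, spotting the cancellation of the quadratic term, and recognizing the two completed squares — and, as already noted in the proof of Theorem~\ref{th01}, using $R+B'P_{k+1}B>0$ so that the inverse appearing in \eqref{PP} is well defined throughout the recursion.
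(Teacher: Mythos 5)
Your proposal is correct; the paper omits the proof of this corollary as ``easily verified,'' and your argument --- checking the constraint by linearity, adding \eqref{PP} and \eqref{BARPP} so that the $A'P_{k+1}B(R+B'P_{k+1}B)^{-1}B'P_{k+1}A$ terms cancel and the remaining $P_{k+1}$-terms complete to $(A+B\bar{F}_k)'P_{k+1}(A+B\bar{F}_k)$, then telescoping $W_k=P_k+\bar{P}_k$ along the closed loop with $W_{N+1}=0$ --- is exactly the standard verification the authors intend. The only cosmetic quibble is that the corollary is not a specialization of Theorem~\ref{th01} (which characterizes the \emph{optimal} controller, whereas here the feedback $u_k^i=\bar{F}_kx_k^i$ is generally suboptimal); your actual proof does not use that remark, so nothing is affected.
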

\begin{proof}
  The results can be easily verified, and thus the proof is omitted here.
\end{proof}

\begin{remark}\label{rem1}
  For the $i$-th individual system \eqref{sys1}, the optimal controller $u_k^i$ of minimizing $J_N^i$ in \eqref{cos1} can be given as
  \begin{align}\label{oc1a}
    u_k^i &=K_kx_k^i,
  \end{align}
where $K_k$ satisfies \eqref{k}, and the minimizing cost function $J_N^{i,*}$ is presented as
  \begin{align}\label{jni8}
    J_N^{i,*} & =(x_i^0)'P_0x_i^0,
  \end{align}
  and $P_0$ can be derived from Riccati equation \eqref{PP}. This is actually the discrete-time standard LQ control problem, see \cite{lew, oka}.
\end{remark}

\begin{remark}\label{rem2}
  It is noted from \eqref{jnii} and \eqref{jni8} that for any initial state $x_0^i$, there must hold
  \begin{align}\label{compaaas}
    (x_i^0)'P_0x_i^0= J_N^{i,*} & \leq \tilde{J}_N^i =(x_0^i)'(P_0+\bar{P}_0)x_0^i,\notag\\
    & \Rightarrow~P_0\leq P_0+\bar{P}_0,
  \end{align}
  then the positive semi-definite of $\bar{P}_0$ can be obtained, i.e., $\bar{P}_0\geq 0$.

  On the other hand, from \eqref{ocf} we know that $J_N^*\geq \sum_{i=1}^{v}J_N^{i,*}$, and $\bar{x}_0'\bar{P}_0\bar{x}_0$ represents the \textbf{coordination term} subject to the constraint \eqref{cons1}.
\end{remark}

\begin{remark}\label{rem3}
  With $\bar{F}_k=K_k$, it can be easily verified that $J_N^*=\sum_{i=1}^{v}J_N^{i,*}$. This implies that if the given matrices $\bar{F}_k$ happens to be chosen as the optimal gain matrix $K_k$, then the optimal $J_N^*$ in \eqref{ocf} equals to the sum of the optimal $J_N^{*,i}$ in \eqref{jni8}.
\end{remark}

\begin{remark}
  For the $i$-th subsystem, the optimal distributed controller \eqref{oc1} only relies on the state of itself $x_k^i$ and the `average' state $\bar{x}_k$. On the other hand, the computation of distributed controller \eqref{oc1} is actually off-line, and the computation burden is independent of the number of subsystems.
\end{remark}

\section{Stabilization and Control of Infinite Horizon Case}
\subsection{Problem Formulation}
Associated with system \eqref{sys1}, the corresponding cost function of the infinite horizon is introduced as:
\begin{align}\label{cost2}
J=\sum_{i=1}^{v}\sum_{k=0}^{\infty}[(x_k^i)'Qx_k^i+(u_k^i)'Ru_k^i].
\end{align}

For the given matrix $\bar{F}$, the constraint on the `average' state $\bar{x}_k$ for the infinite horizon case can be described as
\begin{align}\label{stran1}
  \bar{u}_k & =\bar{F}\bar{x}_k.
\end{align}

In this section, we will consider the stabilization and control problems for system \eqref{sys1} and \eqref{cost2}.

\begin{definition}\label{def1}
  System \eqref{sys1} is called asymptotically stable with $u_k^i=0$, if for any initial state $x_0^i$, there holds $\lim_{k\rightarrow +\infty}x_k^i=0$.
\end{definition}

\begin{definition}\label{def2}
  System \eqref{sys1} is called stabilizable if there exists a linear distributed controller $u_k^i$ in terms of $x_k^i$ and $\bar{x}_k$ such that the closed-loop system \eqref{sys1} is asymptotically stable.
\end{definition}

The problem under consideration in this section can be described as follows.
\begin{problem}\label{pro2}
 Subject to \eqref{stran1}, seek for a distributed controller $u_k^i, i=1,\cdots,v$ such that the system \eqref{sys1} is stabilizable and the cost function \eqref{cost2} is minimized.
\end{problem}

Moreover, to handle the stabilization problem, the standard assumption is made in this section:
\begin{assumption}\label{aobs}
  $(A,C)$ is observable, where $C'C=Q$.
\end{assumption}

Before we present the main conclusion, we will develop some useful lemmas as below.
\begin{lemma}\label{lem02}
  Under Assumptions \ref{ass1} and \ref{aobs}, there exists $N_0>0$ such that $P_0(N)+\bar{P}_0(N)\geq P_0(N)>0$ for any $N>N_0$.
\end{lemma}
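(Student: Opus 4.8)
The plan is to split the statement into two parts: the inequality $P_0(N)+\bar P_0(N)\ge P_0(N)$, which is just $\bar P_0(N)\ge 0$ and is essentially already available, and the strict positivity $P_0(N)>0$ for large $N$, which is the substantive part and is exactly the classical "observability upgrades the LQ value matrix to positive definite" fact.

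For the easy part I would specialize the finite horizon results of Section~II to the time-invariant gain $\bar F_k\equiv\bar F$. By Corollary~\ref{coro1} the feasible choice $u_k^i=\bar F x_k^i$ meets the constraint \eqref{cons1} and yields the cost $(x_0^i)'(P_0(N)+\bar P_0(N))x_0^i$ for the $i$-th subsystem, whereas by Remark~\ref{rem1} the unconstrained minimum of $J_N^i$ is $(x_0^i)'P_0(N)x_0^i$. Since a feasible cost cannot beat the unconstrained optimum, $(x_0^i)'(P_0(N)+\bar P_0(N))x_0^i\ge (x_0^i)'P_0(N)x_0^i$ for every $x_0^i$, hence $\bar P_0(N)\ge 0$ (this is the content of Remark~\ref{rem2}). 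Moreover $P_0(N)\ge 0$ follows directly from the rewritten Riccati recursion \eqref{geh} by the same induction used in the proof of Theorem~\ref{th01}. It therefore remains only to strengthen $P_0(N)\ge 0$ to $P_0(N)>0$.

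For that I would use the variational characterization recalled in Remark~\ref{rem1}: for a single subsystem $x_{k+1}^i=Ax_k^i+Bu_k^i$ with $x_0^i=\xi$ one has $\xi'P_0(N)\xi=\min_{u^i}\sum_{k=0}^{N}[(x_k^i)'Qx_k^i+(u_k^i)'Ru_k^i]$. Assume, for contradiction, that $P_0(N)$ is singular, so $\xi'P_0(N)\xi=0$ for some $\xi\neq 0$. Because $Q=C'C\ge 0$ and $R>0$ by Assumption~\ref{ass1}, every summand in the minimizing cost must vanish; $R>0$ forces $u_k^i=0$ for $k=0,\dots,N$, and then $x_k^i=A^k\xi$, so that $CA^k\xi=0$ for $k=0,\dots,N$. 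I would then take $N_0=n-1$: for any $N>N_0$ the list $C\xi,CA\xi,\dots,CA^{N}\xi$ contains $C\xi,CA\xi,\dots,CA^{n-1}\xi$, so the observability matrix of $(A,C)$ kills $\xi$; by Assumption~\ref{aobs} it has rank $n$, whence $\xi=0$, a contradiction. Thus $P_0(N)>0$ for every $N>N_0$, and combining with $\bar P_0(N)\ge 0$ gives $P_0(N)+\bar P_0(N)\ge P_0(N)>0$.

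I do not anticipate any real obstacle here; the only points needing mild care are the index bookkeeping — checking that summing the stage cost from $0$ to $N$ with $N\ge n-1$ genuinely produces $n$ independent output constraints $CA^k\xi=0$ — and the routine justification that $P_0(N)$ is the optimal cost-to-go matrix of the decoupled single-subsystem LQ problem, which is immediate from the Riccati recursion \eqref{PP} and the completion-of-squares computation already carried out in the proof of Theorem~\ref{th01}.
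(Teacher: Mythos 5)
Your proof is correct and follows essentially the same route as the paper: reduce to showing $P_0(N)>0$ via $\bar P_0(N)\ge 0$ (Remark \ref{rem2}), then argue that a null vector of $P_0(N)$ would force zero optimal cost, hence $u_k^i=0$ and $Cx_k^i=CA^k\xi=0$, contradicting observability. Your version is in fact slightly sharper, since you exhibit the explicit threshold $N_0=n-1$, which the paper leaves implicit.
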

\begin{proof}
As stated in Remark \ref{rem2}, $P_0(N)+\bar{P}_0(N)\geq P_0(N)$ for any $N$, then we only need to show there exists $N_0$ such that $P_0(N)>0$ for $N>N_0$.

Otherwise, for arbitrary $N$, there always exists nonzero $x\in\mathbb{R}^n$ such that $x'P_0(N)x=0$.

From Remark \ref{rem1}, by letting the initial state be $x$ defined above, we can obtain
\begin{align}\label{posd}
  J_N^{i,*}& =\sum_{k=0}^{N}[(x_k^i)'Qx_k^i+(u_k^i)'Ru_k^i]=x'P_0(N)x=0.
\end{align}
By using Assumption \ref{ass1} we know $Q\geq 0$ and $R>0$, then \eqref{posd} indicates that
\begin{align}\label{posd2}
u_k^i=0, ~\text{and}~~Cx_k^i=0,~~k=0,\cdots,N.
\end{align}

Next by using the observable of $(A,C)$ as in Assumption \ref{aobs}, we can conclude that $x_0=x=0$, which contradicts with the nonzero of $x$ defined above. This completes the proof.
\end{proof}

\begin{lemma}\label{thm2}
  Under Assumptions \ref{ass1} and \ref{aobs}, suppose the system \eqref{sys1} is stabilizable, then the following coupled AREs have solution $P, \bar{P}$ satisfying $P+\bar{P}\geq P>0$:
\begin{align}
  P & =Q+A'PA-A'PB(R+B'PB)^{-1}B'PA,\label{are1}\\
  \bar{P}&=(A+B\bar{F})'\bar{P}(A+B\bar{F})+\bar{F}'(R+B'PB)\bar{F}\notag\\
  &+A'PB(R+B'PB)^{-1}B'PA+A'PB\bar{F}+\bar{F}'B'PA. \label{are2}
\end{align}
  \end{lemma}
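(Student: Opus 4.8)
The plan is to prove Lemma~\ref{thm2} by passing to the limit $N\to\infty$ in the finite-horizon Riccati recursions \eqref{PP} and \eqref{BARPP}, and then identifying the limit with the AREs \eqref{are1}--\eqref{are2}. To make the dependence on the horizon explicit, write $P_k(N)$ and $\bar P_k(N)$ for the solutions of \eqref{PP} and \eqref{BARPP} with terminal conditions $P_{N+1}(N)=\bar P_{N+1}(N)=0$. By the usual time-invariance of these recursions one has $P_0(N)=P_{-N+j}(j)$ for any shift, so it is equivalent to study the forward sequence $\Pi_N:=P_0(N)$, $\bar\Pi_N:=\bar P_0(N)$ as $N$ increases.

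The key steps, in order, are as follows. First I would establish monotonicity: $\Pi_{N+1}\ge\Pi_N\ge 0$ for all $N$, which is the standard comparison argument for Riccati recursions — increasing the horizon can only increase the optimal cost $J_N^{i,*}=(x_0^i)'\Pi_N x_0^i$ from Remark~\ref{rem1}, since the cost integrand is nonnegative under Assumption~\ref{ass1}. Second, I would establish a uniform upper bound: because the system is stabilizable in the sense of Definition~\ref{def2}, there is a linear distributed controller rendering \eqref{sys1} asymptotically stable, and plugging that (suboptimal) controller into $J$ yields a finite value that bounds $(x_0^i)'\Pi_N x_0^i$ uniformly in $N$; hence $\Pi_N$ is a bounded monotone sequence of symmetric matrices and converges to some $P\ge 0$. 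Taking limits in \eqref{PP} (using continuity of the matrix inverse, valid since $R+B'\Pi_N B\ge R>0$ throughout) shows $P$ solves \eqref{are1}. Third, with $P$ in hand, \eqref{BARPP} becomes, in the limit, a \emph{linear} (Lyapunov-type) equation for $\bar P$ driven by the now-fixed data: $\bar\Pi_N$ satisfies a linear recursion $\bar\Pi_{N+1}=(A+B\bar F)'\bar\Pi_N(A+B\bar F)+M_N$ with $M_N\to M:=\bar F'(R+B'PB)\bar F+A'PB(R+B'PB)^{-1}B'PA+A'PB\bar F+\bar F'B'PA$; I would argue convergence of $\bar\Pi_N$ either by the same monotonicity-plus-bound reasoning (via Corollary~\ref{coro1}, which identifies $(x_0^i)'(P_0+\bar P_0)x_0^i$ with the cost of the admissible feedback $u_k^i=\bar F_k x_k^i$, bounded uniformly in $N$ under stabilizability) or, once $A+B\bar F$ is known to have the right spectral properties, by the explicit geometric-series solution of the linear recursion. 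Finally, the strict positivity $P+\bar P\ge P>0$ follows from Lemma~\ref{lem02}: for $N>N_0$ we have $\Pi_N\ge\Pi_{N_0}>0$, and this lower bound is inherited by the limit $P$, while $P+\bar P\ge P$ persists in the limit from Remark~\ref{rem2}.

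The main obstacle I anticipate is making the upper-bound/convergence argument for $\bar P$ fully rigorous. Monotonicity of $\bar\Pi_N$ is not obvious a priori because the cross terms $A'PB\bar F+\bar F'B'PA$ in \eqref{BARPP} are indefinite, so the cleanest route is probably to avoid claiming monotonicity of $\bar\Pi_N$ directly and instead work with the \emph{sum} $\Pi_N+\bar\Pi_N$, which by \eqref{leqa} satisfies the clean Lyapunov recursion $\Pi_k+\bar\Pi_k=Q+\bar F'R\bar F+(A+B\bar F)'(\Pi_{k+1}+\bar\Pi_{k+1})(A+B\bar F)$ and is monotone and uniformly bounded (by the cost of the admissible feedback $u_k^i=\bar F x_k^i$ under stabilizability); convergence of $\Pi_N$ then forces convergence of $\bar\Pi_N$, and one reads off \eqref{are2} from \eqref{BARPP} in the limit. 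I would also need to check that Assumption~\ref{aobs} together with stabilizability guarantees $A+B\bar F$ is Schur — or at least that the limiting $\bar P$ is well-defined — but this should follow from the boundedness of $\Pi_N+\bar\Pi_N$ and the detectability supplied by $(A,C)$ with $C'C=Q$.
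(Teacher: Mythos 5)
Your proposal follows essentially the same route as the paper's proof: monotonicity of $P_0(N)$ and of the sum $P_0(N)+\bar P_0(N)$ via the cost interpretations (Remark \ref{rem1} and Corollary \ref{coro1}), a uniform upper bound obtained by plugging a stabilizing suboptimal controller into the infinite-horizon cost, convergence of the two monotone bounded sequences, and positive definiteness from Lemma \ref{lem02}. Your observation that $\bar P_0(N)$ alone need not be monotone and that one should instead exploit the Lyapunov recursion \eqref{leqa} for the sum is precisely how the paper handles it, and your explicit passage to the limit in the recursions to identify \eqref{are1}--\eqref{are2} is a point the paper leaves implicit.
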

\begin{proof}
   Firstly, the monotonically increasing of $P_0(N)$ and $P_0(N)+\bar{P}_0(N)$ with respect to $N$ will be shown as below.

  In fact, recall from Remark \ref{rem1} and Corollary \ref{coro1}, the following two situations are considered:

  1) From Remark \ref{rem1} we know that the minimizing $J_N^{i,*}$ can be presented as:
  \begin{align}\label{oc01}
    J_N^{i,*} & =(x_0^i)'P_0(N)x_0^i.
  \end{align}

  Noting the fact that $J_N^i\leq J_{N+1}^i$, then $J_N^{i,*}\leq J_{N+1}^{i,*}$, from \eqref{oc01} we have
  $$P_0(N)\leq P_{0}(N+1).$$

  2) With $u_k^i=\bar{F}_k(N)x_k^i$ and using Corollary \ref{coro1}, it can be obtained,
  \begin{align}\label{oc02}
   &(x_0^i)'[P_0(N)+\bar{P}_0(N)]x_0^i =\tilde{J}_N^i\notag\\
   \leq &\tilde{J}_{N+1}^i=(x_0^i)'[P_0(N+1)+\bar{P}_0(N+1)]x_0^i.
  \end{align}

  It can be implied from \eqref{oc02} that
  $$P_0(N)+\bar{P}_0(N)\leq P_0(N+1)+\bar{P}_0(N+1).$$

  Next we will prove the boundedness of $P_0(N)$ and $P_0(N)+\bar{P}_0(N)$.

  In fact, since system \eqref{sys1} is stabilizable for $i=1,\cdots,v$, from Definition \ref{def2} we know that there exists constant matrix $L$ such that $u_k^i=Lx_k^i+\bar{L}\bar{x}_k$ stabilizes system \eqref{sys1}, i.e., $\lim_{k\rightarrow+\infty}x_k^i=0,~i=1,\cdots,v$.

   On the other hand, from \eqref{ava} $\bar{x}_k=\sum_{i=1}^{v}\mu_i x_k^i$, then it can be easily obtained that $\lim_{k\rightarrow+\infty}\bar{x}_k=0$.

 Furthermore, by using \eqref{ava}, \eqref{stran1} and the results of references \cite{lsts,mxzhou}, we can conclude
  \begin{align}\label{oc03}
   \sum_{k=0}^{\infty} \bar{x}_k<+\infty,~ \sum_{i=1}^{v}\sum_{k=0}^{\infty} x_k^i<+\infty, ~i=1,\cdots,v.
  \end{align}

For constant matrices $L,\bar{L}$, there exists constant $\lambda$ satisfying:
$$Q+L'RL\leq \lambda I,~ \bar{L}'R\bar{L}\leq \lambda I,~\bar{L}'RL L'R\bar{L} \leq \lambda I.$$

 Thus, by using Schwarz inequality,  from \eqref{oc03} we know the following relationship holds
  \begin{align}\label{rela1}
  &\sum_{i=1}^{v}[(x_0^i)'P_0(N)x_0^i]+\frac{1}{\sum_{i=1}^{v}\mu_i^2}\bar{x}_0'\bar{P}_0(N)\bar{x}_0=J_N^*\notag\\
  &\leq J=\sum_{i=1}^{v}\sum_{k=0}^{\infty}[(x_k^i)'Qx_k^i+(u_k^i)'Ru_k^i]\notag\\
  &=\sum_{i=1}^{v}\sum_{k=0}^{\infty}[(x_k^i)'(Q+L'RL)x_k^i+\bar{x}_k'\bar{L}'R\bar{L}\bar{x}_k\notag\\
  &+2(x_k^i)'L'R\bar{L}\bar{x}_k]\notag\\
  &\leq \lambda \sum_{i=1}^{v}\sum_{k=0}^{\infty}[(x_k^i)'x_k^i+ \bar{x}_k'\bar{x}_k]\notag\\
  &+\sum_{i=1}^{v}\sum_{k=0}^{\infty}[(x_k^i)'x_k^i+ \bar{x}_k'\bar{L}'RLL'R\bar{L}\bar{x}_k]\notag\\
  &\leq (\lambda+1)\sum_{i=1}^{v}\sum_{k=0}^{\infty}(x_k^i)'x_k^i+ 2\lambda\sum_{i=1}^{v}\sum_{k=0}^{\infty}\bar{x}_k'\bar{x}_k<+\infty.
  \end{align}

  From \eqref{geh} and Remark \ref{rem2}, we have $P_0(N)\geq0,\bar{P}_0(N)\geq 0$, then $P_0(N)$ and $P_0(N)+\bar{P}_0(N)$ are bounded.

   Recall that both $P_0(N)$ and $P_0(N)+\bar{P}_0(N)$ are increasing with $N$, thus $P_0(N)$ and $P_0(N)+\bar{P}_0(N)$ are convergent with $N$, i.e., there exists $P$ and $\bar{P}$  satisfying
  \begin{align}\label{conve}
   P=\lim_{N\rightarrow+\infty}P_0(N),~~\bar{P}=\lim_{N\rightarrow+\infty}\bar{P}_0(N).
  \end{align}

 Finally, by using Lemma \ref{lem02} and relationship \eqref{conve}, the positive definiteness of $P$ and $P+\bar{P}$ can be obtained immediately. This completes the proof.\end{proof}

\subsection{Solution to Problem \ref{pro2}}

In the following theorem, we will explore the necessary and sufficient stabilization conditions for system \eqref{sys1}.
\begin{theorem}\label{thsta}
 Suppose Assumptions \ref{ass1} and \ref{aobs} hold, then the following assertions are equivalent:

 1) The system \eqref{sys1} is stabilizable;

 2) $A+B\bar{F}$ is stable;

 3) The coupled AREs \eqref{are1}-\eqref{are2} have unique positive definite solution ($P+\bar{P}\geq P>0$).

In this case, the stabilizing controller is given by
\begin{align}\label{stc}
  u_k^i & =Kx_k^i+\frac{\mu_i}{\sum_{i=1}^{v}\mu_i^2}\bar{K}\bar{x}_k,
\end{align}
where $K,\bar{K}$ are as:
\begin{align}\label{kkk}
  K & =-(R+B'PB)^{-1}B'PA,~\bar{K}=\bar{F}-K.
\end{align}

Moreover, the stabilizing controller \eqref{stc} also minimizes the cost function \eqref{cost2}, and the minimizing $J^*$ can be presented as:
\begin{align}\label{mas}
  J^*=\sum_{i=1}^{v}(x_0^i)'Px_0^i+\frac{1}{\sum_{i=1}^{v}\mu_i^2}\bar{x}_0'\bar{P}\bar{x}_0.
\end{align}
\end{theorem}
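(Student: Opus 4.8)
The plan is to prove the three assertions equivalent by the cyclic chain $1)\Rightarrow 3)\Rightarrow 2)\Rightarrow 1)$, and then to settle the optimality claim by a Lyapunov/completion-of-squares argument. The first implication costs almost nothing: Lemma \ref{thm2} already shows that stabilizability of \eqref{sys1} produces solutions $P,\bar{P}$ of the coupled AREs \eqref{are1}--\eqref{are2} with $P+\bar{P}\geq P>0$, so all that is left for $1)\Rightarrow 3)$ is their \emph{uniqueness}, which I would obtain as a by-product of the step $3)\Rightarrow 2)$.

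For $3)\Rightarrow 2)$ I would add \eqref{are1} and \eqref{are2}. Exactly as the cross terms collapsed in the derivation of \eqref{leqa}, the sum telescopes into the Stein equation
\[
P+\bar{P}=Q+\bar{F}'R\bar{F}+(A+B\bar{F})'(P+\bar{P})(A+B\bar{F}).
\]
Testing this against a putative eigenvector is the crux: if $(A+B\bar{F})x=\lambda x$ with $x\neq 0$ and $|\lambda|\geq 1$, multiplying the Stein equation by $x^{*}$ on the left and $x$ on the right gives $(1-|\lambda|^{2})\,x^{*}(P+\bar{P})x=x^{*}Qx+x^{*}\bar{F}'R\bar{F}x$; since $P+\bar{P}>0$ forces the left-hand side to be non-positive while $Q\geq 0$ and $R>0$ force the right-hand side to be non-negative, both sides vanish, which yields $\bar{F}x=0$ and $Cx=0$, hence $Ax=\lambda x$ with $Cx=0$ — contradicting observability of $(A,C)$ in Assumption \ref{aobs}. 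Therefore $A+B\bar{F}$ is Schur stable. Uniqueness in $3)$ then follows at once: \eqref{are1} is the standard discrete-time Riccati equation, which under Assumption \ref{aobs} admits a unique positive semidefinite (indeed stabilizing) solution $P$ \cite{lew,oka}, and with that $P$ frozen \eqref{are2} becomes a linear Stein equation in $\bar{P}$ with the stable coefficient $A+B\bar{F}$, so its solution is unique.

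For $2)\Rightarrow 1)$ I would simply take $u_k^i=\bar{F}x_k^i$: by Corollary \ref{coro1} it meets the constraint \eqref{stran1}, and the resulting closed loop $x_{k+1}^i=(A+B\bar{F})x_k^i$ is asymptotically stable by hypothesis, so \eqref{sys1} is stabilizable. For the optimality part I would first verify that the proposed feedback \eqref{stc}--\eqref{kkk} is itself stabilizing: weighting \eqref{stc} by the $\mu_i$ and using $\bar{K}=\bar{F}-K$ gives $\bar{x}_{k+1}=(A+B\bar{F})\bar{x}_k\to 0$, whereupon $x_{k+1}^i=(A+BK)x_k^i+\frac{\mu_i}{\sum_{i=1}^{v}\mu_i^{2}}B\bar{K}\bar{x}_k$ with $A+BK$ the stable LQ closed-loop matrix of \eqref{are1}, so $x_k^i\to 0$ as the response of a stable recursion to a vanishing input. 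Then, with $V(k)=\sum_{i=1}^{v}(x_k^i)'Px_k^i+\frac{1}{\sum_{i=1}^{v}\mu_i^{2}}\bar{x}_k'\bar{P}\bar{x}_k$, repeating the completion of squares of \eqref{costa}--\eqref{minu} verbatim — with $P,\bar{P},\bar{F}$ in place of $P_k,\bar{P}_k,\bar{F}_k$ and \eqref{are1}--\eqref{are2} in place of \eqref{PP}--\eqref{BARPP} — gives
\[
V(k)-V(k+1)=\sum_{i=1}^{v}\big[(x_k^i)'Qx_k^i+(u_k^i)'Ru_k^i\big]-\sum_{i=1}^{v}(\Delta_k^{i})'(R+B'PB)\Delta_k^{i},
\]
where $\Delta_k^{i}=u_k^i-Kx_k^i-\frac{\mu_i}{\sum_{i=1}^{v}\mu_i^{2}}\bar{K}\bar{x}_k$. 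For any distributed controller that stabilizes \eqref{sys1} subject to \eqref{stran1} the states decay to zero, so $V(k)\to 0$; summing the identity over $k\geq 0$ then gives $J=V(0)+\sum_{i=1}^{v}\sum_{k=0}^{\infty}(\Delta_k^{i})'(R+B'PB)\Delta_k^{i}\geq V(0)$, with equality precisely for the feedback \eqref{stc}, which we have just shown to be stabilizing; hence $J^{*}=V(0)$, i.e. \eqref{mas}.

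The genuinely new and hardest step is $3)\Rightarrow 2)$: extracting the Stein equation from the coupled AREs and excluding unit-modulus eigenvalues of $A+B\bar{F}$ through the interplay of $R>0$, $Q\geq 0$ and observability of $(A,C)$; this is also what powers the uniqueness claim. Everything else is either already contained in Lemma \ref{thm2}, quoted from classical discrete-time LQ theory \cite{lew,oka} (existence, uniqueness and the stabilizing property of the Riccati solution, stability of $A+BK$), or a line-by-line infinite-horizon rewrite of the finite-horizon computation in Theorem \ref{th01}; the only subtlety there is that the telescoping sum must be justified by first restricting to finite-cost — equivalently, stabilizing — controllers so that $V(k)\to 0$.
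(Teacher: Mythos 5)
Your proposal is correct, but it reaches the equivalences by a genuinely different route than the paper. The paper proves $1)\Leftrightarrow 2)$ directly from the closed-loop average dynamics $\bar{x}_{k+1}=(A+B\bar{F})\bar{x}_k$, proves $3)\Rightarrow 1)$ by a trajectory argument (monotone convergence of the Lyapunov function \eqref{lf}, a time-shift of the telescoping identity \eqref{lf2}, and the positive definiteness of $P_0(N)$ from Lemma \ref{lem02} to force $x_m^i\to 0$), and settles uniqueness in $1)\Rightarrow 3)$ by identifying $x'Px$ and $x'(P+\bar P)x$ with the optimal cost $J^{i,*}$ and the suboptimal cost $\tilde{J}^i$ for every initial state. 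You instead close the cycle $1)\Rightarrow 3)\Rightarrow 2)\Rightarrow 1)$, and your central step — summing \eqref{are1} and \eqref{are2} into the Stein equation $P+\bar P=Q+\bar F'R\bar F+(A+B\bar F)'(P+\bar P)(A+B\bar F)$ (the steady-state version of \eqref{leqa}) and excluding eigenvalues of $A+B\bar F$ with $|\lambda|\ge 1$ via $Q\ge0$, $R>0$ and observability — does not appear in the paper at all; it is purely algebraic, makes the role of Assumption \ref{aobs} transparent, and yields uniqueness of $\bar P$ for free once $A+B\bar F$ is Schur. Your verification that \eqref{stc} is stabilizing (cascade: $\bar x_k\to0$ under $A+B\bar F$, then $x_k^i$ as a stable $A+BK$ recursion driven by a vanishing input) is also more direct than the paper's time-shift argument, at the price of importing the classical facts that under observability any positive semidefinite DARE solution is stabilizing and the stabilizing solution is unique; the paper's uniqueness argument avoids that citation by staying inside its own optimal-cost framework. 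One small caveat: your phrasing that \eqref{are1} ``admits'' a unique positive semidefinite solution under Assumption \ref{aobs} alone overstates existence (which needs a stabilizability-type hypothesis); since existence is already supplied by Lemma \ref{thm2} under assertion 1), only the ``at most one'' half is needed there, and that half is what your argument actually delivers. The completion-of-squares computation for the optimality claim and the formula \eqref{mas} coincide with the paper's.
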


\begin{proof}

Firstly, we will show the equivalence of 1) and 2).

Actually, if the system \eqref{sys1} is stabilizable, i.e., $\lim_{k\rightarrow \infty}x_k^i= 0$ for $i=1,\cdots,v$, then from the definition of $\bar{x}_k=\sum_{i=1}^{v}\mu_i x_k^i$ given by \eqref{ava}, we know the stabilization of $x_k^i$ is equivalent to the stabilization of $\bar{x}_k$.

On the other hand, combining \eqref{ava} with \eqref{stran1} yields $\bar{x}_{k+1}=(A+B\bar{F})x_k$, which indicates that the stabilization of $\bar{x}_k$ is equivalent to the stable of $A+B\bar{F}$. Therefore, the equivalence of 1) and 2) has been shown.

Next, we will prove ``1) $\Leftrightarrow$ 3)".

  `3) $\Rightarrow$ 1)': Under Assumptions \ref{ass1}-\ref{aobs}, suppose the AREs \eqref{are1}-\eqref{are2} admit unique positive definite solution with $P+\bar{P}\geq P>0$, we will show that system \eqref{sys1} can be stabilized with controller \eqref{stc}.

Firstly, we define the following Lyapunov function as follows,
\begin{align}\label{lf}
V(k,x_k)=\sum_{i=1}^{v}(x_k^i)'Px_k^i+\frac{1}{\sum_{i=1}^{v}\mu_i^2}\bar{x}_k'\bar{P}\bar{x}_k.
\end{align}

Similar to the derivation of \eqref{costa}-\eqref{minu}, we can obtain
\begin{align}\label{lf2}
 &~~ V(k,x_k)-V(k+1,x_{k+1})\notag\\
  &=\sum_{i=1}^{v}\Big\{(x_k^i)'[P-A'PA+A'PB(R+B'PB)^{-1}B'PA]x_k^i\notag\\
  &+(u_k^i)'Ru_k^i\Big\}\notag\\
  &+\frac{1}{\sum_{i=1}^{v}\mu_i^2}\bar{x}_k'\Big[\bar{P}\hspace{-1mm}-\hspace{-1mm}(A\hspace{-1mm}+\hspace{-1mm}B\bar{F})'
  \bar{P}(A\hspace{-1mm}+\hspace{-1mm}B\bar{F})
  \hspace{-1mm}-\hspace{-1mm}\bar{F}'(R+B'PB)\bar{F}\notag\\
  &-A'PB\bar{F}-\bar{F}'B'PA-A'PB(R+B'PB)^{-1}B'PA\Big]\bar{x}_k\notag\\
  &-\sum_{i=1}^{v}(u_k^i-Kx_k^i-\frac{\mu_i}{\sum_{i=1}^{v}\mu_i^2}\bar{K}\bar{x}_k)'(R+B'PB)\notag\\&\times
  (u_k^i-Kx_k^i-\frac{\mu_i}{\sum_{i=1}^{v}\mu_i^2}\bar{K}\bar{x}_k)\notag\\
  &=\sum_{i=1}^{v}[(x_{k}^i)'Qx_k^i+(u_k^i)'Ru_k^i]\notag\\
  &-\sum_{i=1}^{v}(u_k^i-Kx_k^i-\frac{\mu_i}{\sum_{i=1}^{v}\mu_i^2}\bar{K}\bar{x}_k)'(R+B'PB)\notag\\&\times
  (u_k^i-Kx_k^i-\frac{\mu_i}{\sum_{i=1}^{v}\mu_i^2}\bar{K}\bar{x}_k)\notag\\ &=\sum_{i=1}^{v}[(x_{k}^i)'Qx_k^i+(u_k^i)'Ru_k^i]\geq 0.
\end{align}

On the other hand, since $P> 0$ and $\bar{P}\geq 0$, we know that $V(k,x_k)$ is bounded below. Thus the convergence of $V(k,x_k)$ is derived.

Since the coefficient matrices $A, B$ are time-invariant, then via a time-shift of $m$ and taking summation on both sides of \eqref{lf2}, there holds
\begin{align}\label{tshf}
  &\lim_{m\rightarrow+\infty}V(m,x_m)-V(m+N+1,x_{m+N+1})\notag\\
  &=\lim_{m\rightarrow+\infty}\sum_{i=1}^{v}\sum_{k=m}^{m+N}[(x_{k}^i)'Qx_k^i+(u_k^i)'Ru_k^i]=0,
\end{align}
where the convergence of $V(k,x_k)$ has been inserted.

Furthermore, from Theorem \ref{th01} we know that
\begin{align}\label{tshf2}
 0 &=\lim_{m\rightarrow+\infty}\sum_{i=1}^{v}\sum_{k=m}^{m+N}[(x_{k}^i)'Qx_k^i+(u_k^i)'Ru_k^i]\notag\\
  &\geq \lim_{m\rightarrow+\infty}\sum_{i=1}^{v}[(x_m^i)'P_m(m+N)x_m^i]\notag\\
  &+\frac{1}{\sum_{i=1}^{v}\mu_i^2}\bar{x}_m'\bar{P}_m(m+N)\bar{x}_m\notag\\
  &= \lim_{m\rightarrow+\infty}\sum_{i=1}^{v}[(x_m^i)'P_0(N)x_m^i]+\frac{1}{\sum_{i=1}^{v}\mu_i^2}\bar{x}_m'\bar{P}_0(N)\bar{x}_m.
\end{align}

Since $\bar{P}_0(N)\geq 0$, then \eqref{tshf2} indicates that
$$\lim_{m\rightarrow+\infty}[(x_m^i)'P_0(N)x_m^i]=0.$$

By using Lemma \ref{lem02} the stabilization of system \eqref{sys1} can be derived, i.e., $\lim_{m\rightarrow+\infty}x_m^i=0$.

Finally, we will show the stabilizing controller \eqref{stc} minimizes the cost function \eqref{cost2}.

In fact, taking summation on both sides of \eqref{lf2} from $0$ to $N$, and taking limitation of $N\rightarrow+\infty$, we have
\begin{align}\label{subn}
  V(0,x_0)&=-\sum_{i=1}^{v}\sum_{k=0}^{\infty}(u_k^i\hspace{-1mm}
  -\hspace{-1mm}Kx_k^i\hspace{-1mm}-\hspace{-1mm}
  \frac{\mu_i}{\sum_{i=1}^{v}\mu_i^2}\bar{K}\bar{x}_k)'(R\hspace{-1mm}+\hspace{-1mm}B'PB)\notag\\
  &\times (u_k^i-Kx_k^i-\frac{\mu_i}{\sum_{i=1}^{v}\mu_i^2}\bar{K}\bar{x}_k)\notag\\
  &+\sum_{i=1}^{v}\sum_{k=0}^{\infty}[(x_{k}^i)'Qx_k^i+(u_k^i)'Ru_k^i],
  \end{align}
where $\lim_{N\rightarrow+\infty}V(N,x_N)=0$ has been inserted.

Thus \eqref{cost2} yields
\begin{align}\label{JJ}
  J & =\sum_{i=1}^{v}\sum_{k=0}^{\infty}(x_k^i)'Px_k^i+\frac{1}{\sum_{i=1}^{v}\mu_i^2}\bar{x}_k'\bar{P}\bar{x}_k\notag\\
  &+\sum_{i=1}^{v}\sum_{k=0}^{\infty}(u_k^i-Kx_k^i-\frac{\mu_i}{\sum_{i=1}^{v}\mu_i^2}\bar{K}\bar{x}_k)'(R+B'PB)\notag\\
  &\times (u_k^i-Kx_k^i-\frac{\mu_i}{\sum_{i=1}^{v}\mu_i^2}\bar{K}\bar{x}_k).
\end{align}

Since $R>0$ in Assumption \ref{ass1} and $P>0$ (i.e., $R+B'PB>0$), thus obviously from \eqref{JJ} we know that the stabilizing controller \eqref{stc} minimizes the cost function \eqref{cost2}, and the minimizing cost function is given by \eqref{mas}.

  `1) $\Rightarrow$ 3)': Under Assumptions \ref{ass1}-\ref{aobs}, if system \eqref{sys1} is stabilizable, we will show the coupled AREs \eqref{are1}-\eqref{are2} have unique positive definite solution $P,\bar{P}$ satisfying $P+\bar{P}\geq P>0$.

  From Lemma \ref{thm2}, we know that AREs \eqref{are1}-\eqref{are2} have positive definite solution such that $P+\bar{P}\geq P>0$, what remains to show is the uniqueness of $P$ and $\bar{P}$.

  In fact, if there exist another $S,\bar{S}$ ($S+\bar{S}\geq S>0$) satisfying
\begin{align}
  S & =Q+A'SA-A'SB(R+B'SB)^{-1}B'SA,\label{are3}\\
  \bar{S}&=(A+B\bar{F})'\bar{S}(A+B\bar{F})+\bar{F}'(R+B'SB)\bar{F}\notag\\
  &+A'SB(R+B'SB)^{-1}B'SA+A'SB\bar{F}+\bar{F}'B'SA. \label{are4}
\end{align}

1) Similar to Remark \ref{rem1}, for any $x_0^i$, we know that the minimizing $J^{i,*}$ can be presented as:
  \begin{align}\label{soc01}
    J^{i,*} & =(x_0^i)'Px_0^i=(x_0^i)'Sx_0^i.
  \end{align}
Thus $P=S$ can be derived from \eqref{soc01}.

  2) On the other hand, similar to Corollary \ref{coro1}, with $u_k^i=\bar{F}x_k^i$, then for any $x_0^i$ there holds
  \begin{align}\label{soc02}
   &\tilde{J}^i=(x_0^i)'(P+\bar{P})x_0^i =(x_0^i)'(S+\bar{S})x_0^i.
  \end{align}
  Therefore, we can conclude $P+\bar{P}=S+\bar{S}$. The uniqueness of $P,\bar{P}$ has been proven. The proof is complete.
\end{proof}
\begin{remark}
  The infinite horizon stabilization and distributed control problems are studied in this section. For the first time, the necessary and sufficient stabilization conditions are obtained in Theorem \ref{thsta}, which have not been derived before \cite{mm1,mm2}.
\end{remark}

\begin{remark}
 In Theorem \ref{thsta}, on one hand, the definition of the Lyapunov function \eqref{lf} is skillful, which is defined with the optimal cost function, see \eqref{ocf}. On the other hand, we have shown that the optimal distributed controller \eqref{stc} also stabilizes the subsystem \eqref{sys1}, i.e., both the stabilization problem and distributed controller design problem for the infinite horizon have been solved.
\end{remark}

\section{Numerical Examples}
In this section, the numerical example will be provided to illustrate the correctness of the main results in this paper.

We consider 5 subsystems (i.e., $v=5$), and the stabilizaiton property will be studied as below.

For system \eqref{sys1}, \eqref{bar} and cost function \eqref{cost2}, the following coefficients are considered:
\begin{align*}
  A&=2,~B=1,~\bar{F}=-1.5,~Q=R=1,v=5,\\
  \mu_1&=0.3,~\mu_2=0.2,~\mu_3=0.3,~\mu_4=0.1,~\mu_5=0.4.
\end{align*}

By solving the AREs \eqref{are1}-\eqref{are2}, $P$ and $\bar{P}$ are presented as:
$$P=4.2361>0,\quad \bar{P}=0.0972>0. $$
Since the condition $P+\bar{P}\geq P>0$ is satisfied, from Theorem \ref{thsta} we know the system \eqref{sys1} can be stabilized with the distributed controller \eqref{stc}. Then the gain matrices $K,\bar{K}$ in \eqref{kkk} can be calculated as:
$$K=-1.6180,\quad \bar{K}=0.1180.$$
Thus, the distributed control $u_k^i,i=1,\cdots,5$ can be given as:
\begin{align}\label{discon}
  u_k^1&=-1.6180x_k^1+0.0908\bar{x}_k, u_k^2=-1.6180x_k^2+0.0605\bar{x}_k,\notag\\
  u_k^3&=-1.6180x_k^3+0.0908\bar{x}_k, u_k^4=-1.6180x_k^4+0.0303\bar{x}_k,\notag\\
  u_k^5&=-1.6180x_k^5+0.1210\bar{x}_k.
\end{align}

Moreover, we denote the initial state for the subsystems as:
$$x_0^1=3, x_0^2=2,x_0^3=1,x_0^4=4,x_0^5=5,\Rightarrow~\bar{x}_0=4.$$

With the stabilizing controller \eqref{discon}, the regulated system state for the subsystems is depicted in Figure \ref{fig1}.
\begin{figure}[htbp]
  \centering
  \includegraphics[width=0.35\textwidth]{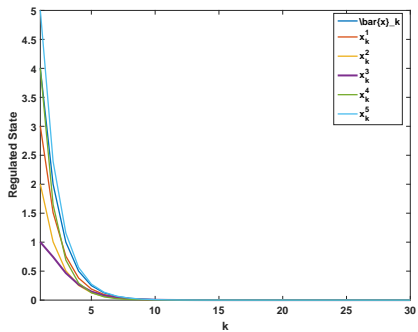}
  \caption{The stabilization of the subsystems with the designed distributed controller \eqref{discon}.}\label{fig1}
\end{figure}

As expected, the state of subsystems converges to 0 with the designed stabilizing controller \eqref{discon}.

\section{Conclusions}
In this paper, we have studied the discrete-time distributed control and stabilization problems for large scale systems with constraints. By developing the maximum principle and decoupling the associated system of the FBDE, we have obtained the optimal distributed controller, and the computation of the distributed controller is independent of the numbers of the subsystems. For infinite horizon case, we have derived the necessary and sufficient stabilization conditions for the first time. For future research, we will extend our work to stochastic systems and systems with delay constraints.

\ifCLASSOPTIONcaptionsoff
  \newpage
\fi

\end{document}